\newtheorem{theorem}{Theorem}[section]
\newtheorem{lemma}[theorem]{Lemma}
\newtheorem{definition}[theorem]{Definition}
\newcommand{\rr}{\mathbb R}
\newcommand{\lip}{\mbox{lip }}
\newcommand{\Lip}{\mbox{Lip }}
\newcommand{\zz}{\mathbb{Z}}
\newcommand{\nn}{\mathbb{N}}
\newcommand{\bde}{\begin{definition}}
\newcommand{\ede}{\end{definition}}
\newcommand{\beq}{\begin{equation}}
\newcommand{\eeq}{\end{equation}}
\newcommand{\be}{\begin{enumerate}}
\newcommand{\ee}{\end{enumerate}}
\begin{document}
\title{Sets where $\Lip f$ is infinite and $\lip f$ is finite.}

\author{ Bruce Hanson, Department of Mathematics,\\ Statistics and Computer Science,\\ St.\ Olaf College,
Northfield, Minnesota 55057, USA\\
{email:} \texttt{hansonb@stolaf.edu}}

\subjclass[2010]{26A21, 26A99}
\keywords{lip, Lip}

\Large

\begin{abstract}

We characterize the sets $E\subset \rr$ such that there exists a continuous function $f:\rr \to \rr$ with $\lip f$ finite everywhere and $\Lip f$ infinite precisely on $E$. 

\end{abstract}

\maketitle

\section{Introduction}Throughout this note we assume that $f:\rr \to \rr$ is continuous.
Then the so-called ``big Lip'' and ``little lip'' functions are defined as follows:

$$
\Lip f(x)=\limsup_{r\rightarrow 0^+}M_f(x,r) \mbox{ and } \lip f(x)=\liminf_{r\rightarrow 0^+}M_f(x,r), $$
where $$M_f(x,r)=\sup\{\frac{|f(x)-f(y)|}r \colon |x-y| \le r\}.$$ 
We also define $L_f^\infty=\{x\in \rr : \, \Lip f(x)=\infty\}$ and $l_f^\infty=\{x\in \rr: \, \lip f(x)=\infty\}$.

Results about the big Lip function date back to the early days of the 20th century.  For example, the proof of the well-known Rademacher-Stepanov Theorem (see \cite{eg}), which states that $f $ is differentiable almost everywhere on the complement of $L_f^\infty$, is almost 100 years old.  (See \cite{m} for an elegant proof of this result.)  On the other hand, the little lip function is a more recent phenomenon.  As far as I know, the first reference to the little lip function occurs in a 1999 paper by Cheeger, \cite{ch}.   Another early reference occurs in (\cite{bc}), where the authors 
show that the sets $L_f^\infty$ and $l_f^\infty$ can differ greatly. There they construct a continuous function $f:\rr\to\rr$ such that $L_f^\infty=\rr$, but $|l_f^\infty|=0$ (here $|S|$ denotes the Lebesgue measure of $S$).  In fact, in their example $\lip f(x)=0 $ on
$\rr\backslash E$, where $|E|=0$. In (\cite{h1}) the author of this note shows that it is possible to make the exceptional set $E$ have Hausdorff dimension 0.

On the other hand, it is impossible to construct a function $f:\rr\to\rr$ such that $L_f^\infty=\rr$ and $\l_f^\infty=\emptyset$.  Balogh and Cs\"ornyei showed (\cite{bc}) that if $l_f^\infty=\emptyset$, then every non-degenerate interval $I \subset \rr$ contains a set of positive measure on which $f$ is differentiable.  It follows that in this case $|L_f^\infty \cap I|<|I|$ for each non-degenerate interval $I$.  This motivates the following definition:

\begin{definition}
\label{trim}
A subset $E$ of $\rr$ is \underline{trim} if $|E \cap (a,b)| < b-a $ for all open intervals $(a,b)$.
\end{definition}

It is straightforward to show that $L_f^\infty$ is a $G_\delta$ set for any continuous function $f$.  Thus, if
$l_f^\infty=\emptyset$, then $L_f^\infty$ is a trim $G_\delta$ set.  In this note we show that this gives a characterization of trim $G_\delta$ sets:

\begin{theorem}\label{trim g delta}
Suppose that $f:\rr\to\rr$ is continuous and $l_f^\infty=\emptyset$.  Then $L_f^\infty$ is a trim $G_\delta$ set. Conversely,
if $E\subset \rr$ is a trim $G_\delta$ set, then there exists a continuous function $f:\rr\to\rr$ such that $L_f^\infty=E$ and $l_f^\infty=\emptyset$.  Moreover, $f$ may be constructed so that $\lip f(x)=0$ for all $x \in E$, as well.
\end{theorem}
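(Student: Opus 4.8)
\emph{The forward direction} is exactly what has been assembled above: $L_f^\infty$ is a $G_\delta$ set for every continuous $f$, and when $l_f^\infty=\emptyset$ the Balogh--Cs\"ornyei differentiability theorem forces $|L_f^\infty\cap I|<|I|$ on every nondegenerate interval $I$, i.e.\ $L_f^\infty$ is trim. So the content of the theorem is the converse, which I would prove by an explicit construction.

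\emph{The model case $E=\{0\}$.} Let $\phi$ be a fixed continuous ``flat-topped tent'': even, $\phi\equiv1$ on $[-\tfrac12,\tfrac12]$, affine and decreasing to $0$ on $[\tfrac12,1]$, and $\equiv0$ off $[-1,1]$. Put $f(x)=\sum_{n\ge1}a_n\,\phi(x/\delta_n)$ with $a_n,\delta_n\downarrow0$. If $\delta_{n+1}\ll\delta_n$ the supports nest, and for each fixed $x\neq0$ at most one summand fails to be locally constant near $x$, so $f$ is continuous and $\Lip f(x)<\infty$; hence $L_f^\infty\subseteq\{0\}$. Since $\phi(x/\delta_n)$ equals $1$ on $[-\delta_n/2,\delta_n/2]$ and $0$ at $\pm\delta_n$, a direct estimate gives $M_f(0,\delta_n)\ge a_n/\delta_n$ and $M_f(0,r)\le r^{-1}\sum_{m:\delta_m\le2r}a_m$. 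Taking $a_n$ double-exponentially small and $\delta_n$ so that $a_n/\delta_n\to\infty$ while $a_{n+1}/\delta_n\to0$ (e.g.\ $a_n=2^{-2^n}$, $\delta_n=\sqrt{a_na_{n+1}}$) makes the first estimate blow up along $r=\delta_n$, so $\Lip f(0)=\infty$, and makes the second tend to $0$ along suitable $r_n\downarrow0$ with $r_n<\delta_{n-1}/2$, so $\lip f(0)=0$. This proves the theorem for $E=\{0\}$.

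\emph{The general case.} Write $E=\bigcap_nU_n$ with $U_n$ open, $U_{n+1}\subseteq U_n$. Trimness makes $E$ have empty interior, so $\rr\setminus E$ is dense; by inserting a suitably separated set of points of $\rr\setminus E$ into $\rr\setminus U_n$ one may further assume that every component of $U_n$ has length $<2\varepsilon_n$, where $\varepsilon_n\downarrow0$ is at our disposal, and that certain ``corners'' below land in $\rr\setminus E$. I would then take $f=\sum_nc_ng_n$, where $g_n$ plays the role of $a_n\phi(\cdot/\delta_n)$ spread over $U_n$: a continuous piecewise-linear function with $0\le g_n\le\min\{h_n,\,K\,\mathrm{dist}(\cdot,\rr\setminus U_n)\}$ (so $g_n\equiv0$ off $U_n$), Lipschitz constant $h_n/w_n$, which on the long components of $U_n$ alternates between $0$ and essentially $h_n$ through transition intervals of width $\sim w_n$ separated by wide flat plateaus, and which is \emph{locally constant on $U_{n+1}$}, its rises occurring in $U_n\setminus U_{n+1}$ (of devil's-staircase type where $E$ is dense). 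The data $(U_n,\varepsilon_n,c_n,h_n,w_n)$ are fixed by a recursion in which each term is far smaller than any prescribed function of its predecessors; in particular $\sum c_nh_n<\infty$ (continuity) and $\sum_{m\ge n}c_mh_m=o(\varepsilon_{n-1})$ and $\sum c_n<\infty$, while necessarily $\sum c_nh_n/w_n=\infty$ (else $M_f\le\sum c_nh_n/w_n$ would bound $\Lip f$ everywhere). The three verifications mirror the model case. If $x\notin E$, fix $N$ with $x\notin U_N$; then $g_n(x)=0$ and $M_{g_n}(x,r)\le K$ for $n\ge N$ while $\sum_{n<N}c_ng_n$ is Lipschitz, so $\Lip f(x)<\infty$ and $L_f^\infty\subseteq E$. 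If $x\in E$, for infinitely many $n$ there is a transition of $g_n$ within $\sim w_n$ of $x$ across which $g_n$ moves by $\sim h_n$, and at scale $r\sim w_n$ the rapid decay makes the other summands contribute $o(c_nh_n/w_n)$, so $M_f(x,r)\gtrsim c_nh_n/w_n\to\infty$ and $L_f^\infty=E$. Finally, for $x\in E$, $g_n$ is constant near $x$ at every scale below its plateau around $x$, so one can choose $r_k\downarrow0$ at which only the finest levels of the sum are active, their total being at most $(C/r_k)\sum_{n:\,\varepsilon_n\lesssim r_k}c_nh_n=o(1)$; hence $\lip f(x)=0$, and with $\lip f\le\Lip f<\infty$ off $E$ this gives $l_f^\infty=\emptyset$.

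\emph{Where the difficulty lies.} On $E$ the two requirements pull against each other --- $\Lip f=\infty$ demands that $f$ be arbitrarily steep arbitrarily close to $x$, while $\lip f=0$ demands that $f$ be genuinely flat at arbitrarily fine scales near $x$ --- and they must hold at \emph{every} point of $E$. This is what dictates the scale-separated design, the ``flat on $U_{n+1}$'' property, and, when $E$ is dense (so the building blocks unavoidably contain points of $E$ on their graphs), the use of devil's-staircase blocks whose rises concentrate on the nowhere-dense set $\overline{U_n\setminus U_{n+1}}$. Trimness enters decisively right here: density of $\rr\setminus E$ is what allows one to cut the components of $U_n$ down to length $<2\varepsilon_n$, to keep the non-flat parts of $g_n$ off most of $E$, and to secure the ``escape scales'' $r_k$ used for $\lip f=0$. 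Arranging the recursion so that all of these constraints are met simultaneously, and checking that the resulting $g_n$ are genuinely piecewise linear with the stated bounds, is the technical heart of the argument.
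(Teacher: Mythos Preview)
Your forward direction matches the paper's. For the converse you take an additive route $f=\sum_n c_ng_n$ with tent-like blocks, whereas the paper builds $f$ as a uniform limit of successively modified approximants $f_n$: on each component $[a,b]$ of $U_n$ it replaces $f_{n-1}$ by a \emph{zig-zag} whose corner heights are $\Phi_{n,[a,b]}(z_j)=2^{-n}\min\{z_j-a,\,b-z_j\}$ and whose corners $Z_n=\{z_j\}$ are ``$n$-close'' (meaning $z_{j+1}-z_j<4^{-n}\min\{z_j-a,\,b-z_{j+1}\}$). This pairing forces slope $\ge 2^n$ on \emph{every} zig-zag edge, regardless of how near that edge sits to $\partial U_n$; a Lipschitz squeeze $g_n\le f\le h_n$ then gives $\Lip f<\infty$ off $E$ and $\lip f=0$ on $E$.

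There is a real gap in your sketch, at the step ``for infinitely many $n$ there is a transition of $g_n$ within $\sim w_n$ of $x$ across which $g_n$ moves by $\sim h_n$.'' You impose $g_n\le K\,\mathrm{dist}(\cdot,\rr\setminus U_n)$ with a \emph{fixed} $K$---this is exactly what makes $\sum_{n\ge N}c_nM_{g_n}(x,r)\le K\sum_{n\ge N}c_n<\infty$ for $x\notin U_N$---but the same bound caps the oscillation of $g_n$ by $K\,\mathrm{dist}(x,\partial U_n)$ near the edge of each component. If $x\in E$ lies within distance $h_n/K$ of $\partial U_n$, the nearby values of $g_n$ cannot swing by anything like $h_n$, and the effective slope there is only $O(K)$, not $h_n/w_n$. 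Nothing in your recursion prevents a point of $E$ from being this close to $\partial U_n$ for \emph{every} $n$; indeed $\inf_{x\in E}\mathrm{dist}(x,\partial U_n)$ can be $0$, so a single amplitude $h_n$ per level cannot work. The paper's device is precisely to let the amplitude vary with position---tending to $0$ at the boundary, but with the corner spacing shrinking even faster (the $n$-close condition) so that the slope stays $\ge 2^n$ throughout. Your additive scheme can be salvaged by the same idea (let the local amplitude and transition width scale with $\mathrm{dist}(\cdot,\partial U_n)$ on each component while their ratio $\to\infty$), but as written the uniform-$(h_n,w_n)$ design does not secure $L_f^\infty\supseteq E$.
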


Now suppose that $f$ is a monotone function.  Then $f$ is differentiable almost everywhere so $|L_f^\infty|=0$.  In this case, we have the following result:

\begin{theorem}\label{monotone} Suppose that $f:\rr\to\rr$ is continuous and monotonic.  Then $L_f^\infty$ is a $G_\delta$ set of measure zero.  Conversely, if
$E\subset\rr$ is a $G_\delta$ set with measure zero, then there exists a continuous, monotonic function $f:\rr\to\rr$ such that $L_f^\infty=E$ and $l_f^\infty=\emptyset$.  Moreover, $f$ may be constructed so that $\lip f(x)=0$ for all $x \in E$.
\end{theorem}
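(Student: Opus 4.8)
\emph{Proof plan.} The forward implication is short. Since $M_{-f}=M_f$, on replacing $f$ by $-f$ if necessary we may assume $f$ is non-decreasing; then for $|x-y|\le r$ we have $f(x-r)\le f(y)\le f(x+r)$, so $M_f(x,r)=\tfrac1r\max\{f(x+r)-f(x),\,f(x)-f(x-r)\}$. By Lebesgue's theorem $f$ is differentiable almost everywhere, and at any point $x$ where $f'(x)$ exists and is finite both one-sided difference quotients converge to $f'(x)$, so $M_f(x,r)\to f'(x)$ and therefore $\Lip f(x)=f'(x)<\infty$. Hence $L_f^\infty$ is contained in the set of points at which $f$ has no finite derivative, a set of measure zero, so $|L_f^\infty|=0$; and $L_f^\infty$ is $G_\delta$ by the remark preceding Theorem~\ref{trim g delta}.

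For the converse, note first that every null set is trim, so Theorem~\ref{trim g delta} already yields a continuous $f$ with $L_f^\infty=E$, $l_f^\infty=\emptyset$ and $\lip f\equiv0$ on $E$; only monotonicity must be added. I would obtain it by building $f$ as the distribution function $f(x)=\mu((-\infty,x])$ of a finite non-atomic Borel measure $\mu$ on $\rr$, which is then automatically continuous and non-decreasing. For such an $f$, $M_f(x,r)=\tfrac1r\max\{\mu((x,x+r]),\,\mu((x-r,x])\}\le\tfrac1r\mu([x-r,x+r])$, so it suffices to choose $\mu$ with (a) for every $x\in E$, $\liminf_{r\to0^+}\mu([x-r,x+r])/r=0$ and $\limsup_{r\to0^+}\max\{\mu((x,x+r]),\mu((x-r,x])\}/r=\infty$; and (b) for every $x\notin E$, $\limsup_{r\to0^+}\mu([x-r,x+r])/r<\infty$. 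Indeed (a) forces $\lip f(x)=0$ and $\Lip f(x)=\infty$ on $E$, while (b) forces $\Lip f(x)<\infty$ (hence $\lip f(x)<\infty$) off $E$; together with $\lip f\equiv0$ on $E$ this gives $L_f^\infty=E$ and $l_f^\infty=\emptyset$.

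To build $\mu=\sum_n\mu_n$, fix a decreasing sequence of open sets $G_n\supseteq E$ with $\bigcap_nG_n=E$ and $|G_n|\downarrow0$ as rapidly as desired; then every component $I$ of $G_n$ satisfies $|I|\le|G_n|\to0$, so each $x\in E$ lies in a nested chain of components $I_1\supseteq I_2\supseteq\cdots$ with $|I_n|\to0$. The model case $E=\{0\}$, $G_n=(-\epsilon_n,\epsilon_n)$, already displays the mechanism: take $\mu_n$ absolutely continuous with bounded density, supported on $[\epsilon_{n+1},2\epsilon_{n+1}]$, with total mass $\approx\delta_n\epsilon_n$ for some $\delta_n\to0$; then $\mu([0,\epsilon_m])/\epsilon_m\approx\delta_m\to0$ (flat scales, so $\lip f(0)=0$) while $\mu([0,2\epsilon_m])/(2\epsilon_m)\gtrsim\delta_{m-1}\epsilon_{m-1}/\epsilon_m\to\infty$ (steep scales, so $\Lip f(0)=\infty$) as soon as $\epsilon_{m-1}/\epsilon_m$ grows faster than $1/\delta_{m-1}$; and since the supports of the $\mu_n$ cluster only at $0$, (b) holds at every $x\ne0$. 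For general $E$ one runs this simultaneously inside every component of every $G_n$, placing the mass of $\mu_n$ within the components of $G_n$ at scales governed by the components of $G_{n+1}$, so that every $x\in E$ sees the same alternation of flat and steep scales along its chain $(I_n)$.

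I expect the main obstacle to be exactly the tension between (a) and (b). Forcing $\Lip f=\infty$ at \emph{every} point of $E$ requires the mass of $\mu$ to be concentrated in thin sets clustering around each point of $E$, while (b) demands that this mass not cluster around any point of $E^c$; and when $E$ is dense in an interval one cannot simply arrange $\operatorname{supp}\mu_n\subseteq G_n$ and stop, since $\overline{G_n}$ may meet $E^c$. Reconciling these forces a careful inductive choice of the supports --- shrinking inside the components of $G_n$ and taking $|G_{n+1}|$ tiny relative to the geometry already fixed at stage $n$ --- so that $\overline{\bigcup_{n\ge N}\operatorname{supp}\mu_n}$ decreases to $E$, while each $\mu_n$ is kept of bounded density so that only finitely many layers control the local behaviour at any point of $E^c$. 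This bookkeeping is essentially that of the proof of Theorem~\ref{trim g delta}, so in practice I would either invoke that construction, checking that when $|E|=0$ each constituent function may be taken non-decreasing, or reprove it directly in the present, simpler monotone setting.
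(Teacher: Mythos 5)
Your forward direction is fine and matches the paper's: differentiability a.e.\ of monotone functions gives $|L_f^\infty|=0$, and the $G_\delta$ property is the general remark preceding Theorem \ref{trim g delta}. For the converse your mechanism is also the right one, and it is in spirit the paper's: $f$ is an increasing function whose increase is concentrated near $E$, so that every $x\in E$ sees alternately steep and flat scales while points of $E^c$ are controlled by finitely many layers. But the write-up stops exactly where the real work begins. The model case $E=\{0\}$ plus the phrase ``run this simultaneously inside every component of every $G_n$'' is not yet a construction: for a general null $G_\delta$ set you must guarantee, at \emph{every} $x\in E$, one-sided intervals of arbitrarily small length $r$ carrying mass at least $C_nr$ with $C_n\to\infty$ (steepness) \emph{and} arbitrarily small radii at which the mass of $[x-r,x+r]$ is $o(r)$ (flatness), while at every $x\notin E$ --- including points of $E^c$ lying in $\bigcap_n\overline{G_n}$ or arbitrarily close to the supports of all the $\mu_n$ --- the mass of $[x-r,x+r]$ stays $O(r)$. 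In your scheme the stage-$n$ mass inside a component $I$ of $G_n$ is spread ``at scales governed by the components of $G_{n+1}$'', but then the mass actually available near a given $x\in E$ need not be comparable to $\delta_n|I|$, so the steep-scale estimate from the model case does not transfer; you would have to specify how the stage-$n$ mass is apportioned among the infinitely many components of $G_{n+1}$ inside $I$ and near which points it accumulates. The paper resolves precisely this with two explicit devices: the ``$n$-small'' condition (Definition \ref{n small}), prescribing the measure of $U_n$ in every dyadic subinterval near the endpoints of each component of $V_n$, which yields the uniform flatness bound $M_f(x,r_x)\le 2^{-n+2}$ of Lemma \ref{Mf est lemma}; and the singular-type functions $g_n(x)=2^n|U_n\cap[a,x]|+f_n(a)$, whose slope is $2^n$ on all of $U_n$, so that over the component $(a_n,b_n)$ of $U_n$ containing $x$ one gets $f(b_n)-f(a_n)=2^n(b_n-a_n)$, i.e.\ steepness at every point of $E$ simultaneously, together with the redistribution Lemma \ref{f lipschitz}, which keeps each $f_{n+1}$ Lipschitz on the closure of each component of $U_n$ and hence $\Lip f<\infty$ off $E$.

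One concrete warning about your fallback: you cannot get monotonicity by ``checking that each constituent function in the proof of Theorem \ref{trim g delta} may be taken non-decreasing.'' That construction is built from zig-zag functions which return to their starting value on each component; their oscillation is exactly what produces $\Lip f=\infty$ while keeping $f$ squeezed between the Lipschitz envelopes $g_n$ and $h_n$, and no monotone modification plays the same role. The paper's proof of Theorem \ref{monotone} is a separate construction for this very reason, and your measure-theoretic plan, once the distribution of mass is pinned down as above, essentially reproduces it (your $\mu$ being the Lebesgue--Stieltjes measure of the paper's $f$). So the plan is salvageable, but the inductive mass-placement argument for general $E$ is the substance of the proof and is currently missing.
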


The layout of this paper is the following: In section 2 we present our notation and state a few useful lemmas.  Sections 3 and 4 contain the proofs of Theorems \ref{trim g delta} and \ref{monotone}, respectively.  In the final section we offer a few open problems that are connected to Theorems \ref{trim g delta} and \ref{monotone}. 

Other interesting recent results concerning the big and little lip functions can be found in \cite{bhmv1, bhmv2, bhmv3, h2}

\section{Definitions and Lemmas}

\begin{definition}\label{one side}
Given a continuous function $f:\rr\to\rr$, we define
$$\Lip^+ f(x)=\limsup_{r\to0^+}\sup_{0\le y-x \le r} \frac{|f(x)-f(y)|}r,$$
and
$$\Lip^- f(x)=\limsup_{r\to0^+}\sup_{0\le x-y \le r} \frac{|f(x)-f(y)|}r.$$
\end{definition}

Note that $\Lip f(x)=\max\{\Lip^+f(x),\Lip^-f(x)\}$. We leave the proof of the following simple lemma up to the reader.

\begin{lemma}\label{lip squeeze}
Suppose that $g$ and $h$ are Lipschitz on $[a,b]$ and that $g(x)\le f(x) \le h(x)$ for all $x \in [a,b]$ and
$A=\{x \in [a,b]: \, g(x)=h(x)\}.$  Then
\beq\label{lip on A}
\Lip f(x)<\infty \mbox{ for all } x \in A \cap (a,b)
\eeq
and $\Lip^+f(a) <\infty$ if $a \in A$ and $\Lip^-f(b)<\infty$ if $b \in A$.
\end{lemma}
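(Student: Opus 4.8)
The plan is to show directly that at any point $x\in A$ the difference quotients of $f$ are bounded by a single Lipschitz constant for $g$ and $h$. First I would fix $L>0$ that is simultaneously a Lipschitz constant for $g$ and for $h$ on $[a,b]$, and observe that if $x\in A$ then $g(x)=f(x)=h(x)$, the middle equality being forced by $g(x)\le f(x)\le h(x)$. Then for every $y\in[a,b]$ the sandwich inequalities give
\[
f(y)-f(x)\le h(y)-h(x)\le L|y-x|\qquad\text{and}\qquad f(y)-f(x)\ge g(y)-g(x)\ge -L|y-x|,
\]
so $|f(x)-f(y)|\le L|x-y|$ for all $y\in[a,b]$.

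Next I would feed this pointwise estimate into the definitions. If $x\in A\cap(a,b)$, pick $r_0>0$ with $[x-r_0,x+r_0]\subset[a,b]$; then for $0<r\le r_0$ every $y$ with $|x-y|\le r$ lies in $[a,b]$, so $|f(x)-f(y)|\le Lr$ and hence $M_f(x,r)\le L$. Letting $r\to0^+$ yields $\Lip f(x)\le L<\infty$, which is (\ref{lip on A}). The endpoint cases are the same computation restricted to one side: if $a\in A$, then for $0<r\le b-a$ and $0\le y-a\le r$ we have $y\in[a,b]$ and $|f(a)-f(y)|\le Lr$, giving $\Lip^+f(a)\le L$; and symmetrically, if $b\in A$ then using $0\le b-y\le r$ gives $\Lip^-f(b)\le L$.

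There is no real obstacle here — the lemma is genuinely routine — but two small points deserve care. One must not simply bound $|f(y)-f(x)|$ by $|h(y)-g(x)|$; instead one pairs the upper bound coming from $h$ with the lower bound coming from $g$ as above, which is exactly what makes the one-sidedness of $\Lip^\pm$ unnecessary at interior points. And one must ensure the sandwich $g\le f\le h$ is actually available at the competing point $y$, i.e.\ that $y\in[a,b]$; this is why the interior conclusion requires $x\in(a,b)$ and why at the endpoints only the one-sided quantities $\Lip^+f(a)$ and $\Lip^-f(b)$ can be controlled.
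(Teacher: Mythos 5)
Your argument is correct: since $g(x)=f(x)=h(x)$ at points of $A$, pairing the upper bound from $h$ with the lower bound from $g$ gives $|f(x)-f(y)|\le L|x-y|$ for all $y\in[a,b]$, which immediately bounds $M_f(x,r)$ (or its one-sided versions at the endpoints). The paper leaves this lemma to the reader, and your proof is exactly the routine sandwich argument that is intended, including the correct handling of why only $\Lip^+f(a)$ and $\Lip^-f(b)$ are controlled at the endpoints.
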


\begin{definition}\label{phi}
Given a closed interval $I=[a,b]$ and $n \in \nn$, we define
\beq\label{phi def}
\Phi_{n,I}(x))= \left\{ \begin{array}{cc}
            \frac1{2^n}\min\{x-a,b-x\} & \mbox{if } x\in [a,b] \\
            0  & \mbox{if } x\notin [a,b]
             \end{array}
       \right.
\eeq
\end{definition}

\begin{definition}\label{n close}
Let $Z=\{z_j:\, j\in \zz\}\subset (a,b)$.  We say that $Z$ is \underline{$n$-close on $(a,b)$} if
\beq\label{zj monotone}
a<z_j<z_{j+1}<b \mbox{ for all }j \in \zz,
\eeq
\beq\label{zj lim a}
\lim_{j\to \infty}z_j=b \mbox{ and } \lim_{j\to -\infty}z_j=a,
\eeq
and
\beq\label{zj geom}
z_{j+1}-z_j < \frac1{4^n}\min\{z_j-a,b-z_{j+1}\} \mbox{ for all }j \in \zz.
\eeq
\end{definition}

\begin{definition}\label{n close U}
Given an open set $U$ and a countable set
$Z\subset U$, we say that $Z$ is \underline{$n$-close on $U$} if $Z \cap (a,b)$ is $n$-close on $(a,b)$ for each
component $(a,b)$ of $U$.
\end{definition}

\begin{definition}\label{zig zag}
Suppose that $Z=\{z_j : \, j\in \zz\}$ is $n$-close on $[a,b]$.  Then we say that $f:[a,b]\to \rr$ is \underline{zig-zag of order $n$} with respect to $Z$ on $[a,b]$ if $f(a)=f(b)$ and

\beq\label{zig zag def}
f(z_j)= \left\{ \begin{array}{cc}
            f(a) & \mbox{if } j \mbox{ is even} \\
            f(a)+\Phi_{n,[a,b]}(z_j) & \mbox{if } j \mbox{ is odd}
             \end{array}
       \right.
\eeq
and
\beq\label{linear on}
f \mbox{ is linear on }[z_j,z_{j+1}] \mbox{ for all }j \in \zz.
\eeq
\end{definition}

\begin{definition}\label{trim g}
Suppose that $f$ is linear on $[a,b]$ and $U$ is an open set such that $|U\cap [a,b]| < b-a$ and $\{a,b\}\cap U=\emptyset.$  Then we define $g=g(f,[a,b],U)$ on $[a,b]$ as follows:
$$ g(x)=f(a)+\frac{|[a,b]\backslash U)\cap [a,x]|}{|[a,b]\backslash U|}(f(b)-f(a)).$$
\end{definition}

The proof of the following lemma is straightforward and left to the reader.

\begin{lemma}\label{linear lip}
Suppose that $f$ is linear on $[a,b]$ and $U$ is an open set such that $|U\cap [a,b]|<b-a$ and $\{a,b\}\cap U=\emptyset$. Let $g=g(f,[a,b],U)$.

Then
\beq\label{lip on ab}
g \mbox{ is Lipschitz on }[a,b],
\eeq

\beq\label{g const}
g \mbox{ is constant on each interval }(r,s) \mbox{ contained in }[a,b]\cap U,
\eeq

\beq\label{g equal f}
g(a)=f(a) \mbox{ and }g(b)=f(b),
\eeq
and
\beq\label{fg close}
||g-f||_\infty \le |f(b)-f(a)|.
\eeq
\end{lemma}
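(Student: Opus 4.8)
\medskip
\noindent\textbf{Proof proposal.}
The plan is to rewrite $g$ in terms of a single monotone auxiliary function and then read off the four conclusions essentially for free. First I would set $K=[a,b]\setminus U$; since $U$ is open, $K$ is closed, hence Lebesgue measurable, and the hypothesis $|U\cap[a,b]|<b-a$ guarantees $m:=|K|=(b-a)-|U\cap[a,b]|>0$, so that $g$ is well defined. Note also that $a,b\in K$, because $\{a,b\}\cap U=\emptyset$. Introducing $\mu(x)=|K\cap[a,x]|$ for $x\in[a,b]$, the defining formula for $g$ becomes
\[
g(x)=f(a)+\frac{\mu(x)}{m}\,\bigl(f(b)-f(a)\bigr).
\]

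Next I would establish three elementary facts about $\mu$: that $\mu(a)=0$ and $\mu(b)=m$; that $\mu$ is nondecreasing and $1$-Lipschitz, since $0\le\mu(y)-\mu(x)=|K\cap(x,y]|\le y-x$ whenever $a\le x\le y\le b$; and that $\mu$ is constant on every open subinterval $(r,s)$ of $[a,b]\cap U$, since in that case $K\cap(x,y]=\emptyset$ for all $x,y\in(r,s)$. With these facts in hand the conclusions follow at once: the Lipschitz estimate $|g(x)-g(y)|\le\frac{|f(b)-f(a)|}{m}\,|x-y|$ gives \eqref{lip on ab}; the third fact about $\mu$ gives \eqref{g const}; substituting $\mu(a)=0$ and $\mu(b)=m$ gives $g(a)=f(a)$ and $g(b)=f(b)$, which is \eqref{g equal f}; and since $\mu(x)/m\in[0,1]$, each $g(x)$ is a convex combination of $f(a)$ and $f(b)$, so both $g(x)$ and --- by linearity of $f$ on $[a,b]$ --- $f(x)$ lie in the closed interval with endpoints $f(a)$ and $f(b)$, an interval of length $|f(b)-f(a)|$; hence $|g(x)-f(x)|\le|f(b)-f(a)|$ for all $x\in[a,b]$, which is \eqref{fg close}.

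I do not anticipate any genuine obstacle here: the entire content is the reformulation of $g$ through $\mu$. The only place that calls for a moment's care is the measure-theoretic bookkeeping --- measurability of $K$, the harmlessness of the individual endpoints added to or removed from the sets $K\cap[a,x]$ (each having measure zero), and the strict inequality $m>0$ coming from the hypothesis $|U\cap[a,b]|<b-a$ --- all of which is routine.
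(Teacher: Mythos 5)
Your proposal is correct, and since the paper explicitly leaves this lemma's proof to the reader, your argument via the auxiliary function $\mu(x)=|([a,b]\setminus U)\cap[a,x]|$ --- $1$-Lipschitz, constant on subintervals of $U$, with $\mu(a)=0$ and $\mu(b)=|[a,b]\setminus U|>0$ --- is exactly the routine verification the author intends. All four conclusions, including the convex-combination argument for \eqref{fg close}, follow as you state.
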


\begin{definition}\label{trim in ab}
Given an open set $U$ and a closed interval $[a,b]$, we say that $U$ is\underline{ trim in $[a,b]$} if $|U\cap [a,b]|<b-a$ and $\{a,b\}\cap U =\emptyset$. Furthermore, if $\mathcal{F}=\{[a_n,b_n]:\, n \in \nn\}$ is a collection of pairwise non-overlapping closed interevals, we say that $U$ is\underline{ trim on $\mathcal{F}$} if $U$ is trim in $[a_n,b_n]$ for all $n \in \nn$.
\end{definition}

\begin{lemma}\label{n close zig zag}
Let $n \in \nn$.  Suppose that $Z=\{z_i: \, i\in \zz\}$ is $n$-close on $[a,b]$ and $F$ is zig-zag of order $n$ with respect to $Z$ on $[a,b]$.  Then
\beq\label{F steep}
|F(z_i)-F(z_{i+1})|\ge 2^n |z_i-z_{i+1}| \mbox{ for all }i \in \nn
\eeq

Furthermore, let $\mathcal{F}=\{[z_i,z_{i+1}]:\, i \in \zz\}$ and assume
$U=\sqcup_{i\in J} (a_i,b_i)=\sqcup_{i\in J} I_i\subset(a,b)$ is trim on $\mathcal{F}$ and $G:[a,b] \to \rr$ satisfies
\beq\label{G and F}
G(a)=G(b)=F(a)
\eeq
and
\beq\label{G restrict F}
G\vert_{[z_i,z_{i+1}]}=g(F,[z_i,z_{i+1}],U) \mbox{ for all }i \in \zz.
\eeq
Define
$$H(x)=G(x)+\sum_{i\in J} \Phi_{n,[a_i,b_i]}(x) \mbox{ for all }x \in [a,b].$$
Then
\beq\label{H less G}
H(x)\le G(a)+2\Phi_{n,[a,b]}(x) \mbox{ for all }x \in [a,b].
\eeq
\end{lemma}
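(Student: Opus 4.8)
The plan is to prove the two displayed assertions in turn. For \eqref{F steep}, fix $i \in \zz$. Of $z_i$ and $z_{i+1}$, one has even index and the other odd, so by \eqref{zig zag def} one of $F(z_i),F(z_{i+1})$ equals $F(a)$ and the other equals $F(a)+\Phi_{n,[a,b]}(z)$, where $z$ denotes the odd-indexed endpoint; hence $|F(z_i)-F(z_{i+1})|=\Phi_{n,[a,b]}(z)=\frac1{2^n}\min\{z-a,b-z\}$. Since $z_i\le z\le z_{i+1}$ we have $\min\{z-a,b-z\}\ge\min\{z_i-a,b-z_{i+1}\}$, and \eqref{zj geom} gives $\min\{z_i-a,b-z_{i+1}\}>4^n(z_{i+1}-z_i)$; multiplying by $2^{-n}$ yields $|F(z_i)-F(z_{i+1})|>2^n(z_{i+1}-z_i)$, which gives \eqref{F steep}.

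For \eqref{H less G}, note $G(a)=F(a)$, so the goal is $H(x)-F(a)\le 2\Phi_{n,[a,b]}(x)$ for every $x\in[a,b]$. For $x\in\{a,b\}$ this is immediate, since $\Phi_{n,[a,b]}$ and every $\Phi_{n,[a_i,b_i]}$ vanish there and $G(a)=G(b)=F(a)$, so I would fix $x\in(a,b)$ and choose $j$ with $z_j\le x\le z_{j+1}$, then estimate the two pieces of $H(x)=G(x)+\sum_{i\in J}\Phi_{n,[a_i,b_i]}(x)$ separately. First, by \eqref{G restrict F} and Definition \ref{trim g}, $G(x)=g(F,[z_j,z_{j+1}],U)(x)$ is a convex combination of $F(z_j)$ and $F(z_{j+1})$, both of which lie in $[F(a),F(a)+\Phi_{n,[a,b]}(z)]$ where $z$ is the odd-indexed endpoint of $[z_j,z_{j+1}]$; since $w\mapsto\min\{w-a,b-w\}$ is $1$-Lipschitz, $F(a)\le G(x)\le F(a)+\Phi_{n,[a,b]}(z)\le F(a)+\Phi_{n,[a,b]}(x)+\frac1{2^n}(z_{j+1}-z_j)$. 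Second, since $U$ is trim on $\mathcal F$, no point of $Z$ lies in $U$, so every component $I_i$ of $U$ sits inside a single gap $(z_k,z_{k+1})$; thus at most one term of $\sum_{i}\Phi_{n,[a_i,b_i]}(x)$ is nonzero, and if $x\in I_{i_0}\subset(z_j,z_{j+1})$ then $\Phi_{n,[a_{i_0},b_{i_0}]}(x)\le\frac1{2^{n+1}}(b_{i_0}-a_{i_0})\le\frac1{2^{n+1}}(z_{j+1}-z_j)$. Summing gives $H(x)-F(a)\le\Phi_{n,[a,b]}(x)+\frac3{2^{n+1}}(z_{j+1}-z_j)$.

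Finally I would absorb the error term using \eqref{zj geom} once more: $z_{j+1}-z_j<\frac1{4^n}\min\{z_j-a,b-z_{j+1}\}\le\frac1{4^n}\min\{x-a,b-x\}=\frac1{2^n}\Phi_{n,[a,b]}(x)$, hence $\frac3{2^{n+1}}(z_{j+1}-z_j)<\frac3{2^{2n+1}}\Phi_{n,[a,b]}(x)\le\frac38\Phi_{n,[a,b]}(x)$ since $n\ge1$, and therefore $H(x)-F(a)<\frac{11}8\Phi_{n,[a,b]}(x)<2\Phi_{n,[a,b]}(x)$, as required. The load-bearing step --- the one I would be most careful about --- is precisely this geometric accounting: $n$-closeness \eqref{zj geom} is exactly what makes the two error contributions (from replacing $\Phi_{n,[a,b]}(z)$ by $\Phi_{n,[a,b]}(x)$, and from the single surviving bump $\Phi_{n,[a_{i_0},b_{i_0}]}$) into small multiples of $\Phi_{n,[a,b]}(x)\ge\frac1{2^n}\min\{z_j-a,b-z_{j+1}\}$, while trimness of $U$ on $\mathcal F$ is what guarantees each component of $U$ lies in one gap, so that only one bump can be active at $x$. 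Everything else --- the well-definedness and continuity of $G$ (it agrees with $F$ at every $z_k$ by \eqref{g const} and \eqref{g equal f}), and the convex-combination description of $g$ from its defining formula --- is routine bookkeeping.
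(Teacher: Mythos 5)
Your proof is correct and follows essentially the same route as the paper's: split $H(x)$ into $G(x)$ plus the bump sum, bound $G(x)$ by $F(a)+\Phi_{n,[a,b]}$ at the odd gap endpoint, use trimness (so each component of $U$ lies in a single gap) to bound the surviving bump by a multiple of $z_{j+1}-z_j$, and then absorb the error via the $n$-closeness condition \eqref{zj geom}. The only difference is cosmetic: you avoid the paper's case split (on the parity of $j$ and on whether $z_j-a\le b-z_{j+1}$) by using the $1$-Lipschitz property of $w\mapsto\min\{w-a,b-w\}$, and you spell out the short verification of \eqref{F steep} that the paper leaves as immediate.
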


\begin{proof} Note that (\ref{F steep}) follows directly from the definition of $F$ and (\ref{zj geom}).  Let $x \in [a,b]$.  Since equality holds in (\ref{H less G}) when $x=a$ or $x=b$, we may assume that $x \in (a,b)$.  Choose $j\in \zz$ such that $x \in [z_j,z_{j+1}]$.  We only consider the case where $j$ is even and $z_j-a \le b-z_{j+1}$.  (The other cases are handled by a similar argument.)  Note that the last inequality implies that
\beq\label{phi ineq}
\Phi_{n,[a,b]}(z_j) \le \Phi_{n,[a,b]}(y) \mbox{ for all }y \in [z_j,z_{j+1}].
\eeq
Since $j$ is even, it follows from the definition of $G$ that
\beq
G(x) \le F(z_{j+1}) = G(a)+\Phi_{n,[a,b]}(z_{j+1}).
\eeq
Note also that 
$$\sum_{i\in J}\Phi_{n,[a_i,b_i]}(x) \le \Phi_{n,[z_j,z_{j+1}]}(x),$$
and so we get 
\beq \label{H less than G}
H(x) \le G(a)+\Phi_{n,[a,b]}(z_{j+1})+\Phi_{n,[z_j,z_{j+1}]}(x).
\eeq

Now, using the fact that $Z$ is $n$-close on $[a,b]$, the definition of $\Phi_{n,[a,b]}$ and the assumption that
$z_j-a \le b-z_{j+1}$, we get
\beq\label{1 over 4n}
x-z_j\le z_{j+1}-z_j \le \frac1{4^n}(z_j-a)= \frac1{2^n}\Phi_{n,[a,b]}(z_j).
\eeq

From (\ref{1 over 4n}) it follows that
\beq\label{phi 1 over 2n}
\Phi_{n,[a,b]}(z_{j+1})\le \Phi_{n,[a,b]}(z_j)+\frac1{2^n}(z_{j+1}-z_j)\le (1+\frac1{4^n})\Phi_{n,[a,b]}(z_j)
\eeq
and then using (\ref{1 over 4n}) and (\ref{phi ineq}), we have
\beq\label{phi 1 over}
\Phi_{n,[z_j,z_{j+1}]}(x)\le \frac1{2^{n}}(x-z_j)\le \frac1{4^n}\Phi_{n,[a,b]}(z_j)\le\frac1{4^n}\Phi_{n,[a,b]}(x).
\eeq
Now (\ref{H less G}) follows from (\ref{H less than G}), (\ref{phi 1 over 2n}), and (\ref{phi 1 over}) and the proof of the lemma is complete.

\end{proof}

\section{Proof of Theorem \ref{trim g delta}}

\begin{proof}

Let $E$ be a trim $G_\delta$ set.  It suffices to show that for every $a<b$ with $a,b  \in \rr\backslash E$ we can construct $f:[a,b]\to \rr$ with $\lip f=0$ on $E \cap (a,b)$ and such that $L_f^\infty \cap(a,b) =E \cap (a,b)$, $\Lip^+f(a)<\infty$ and $\Lip^-f(b)<\infty$. We assume without loss of generality that $(a,b)=(0,1)$ and $E\subset (0,1)$.  So our goal is to construct a function $f: [0,1]\to \rr$ such that
\beq\label{Lf E}
L_f^\infty=E
\eeq
\beq\label{Lip f(0)}
\Lip^+f(0) < \infty \mbox{ and } \Lip^-f(1)< \infty
\eeq
and
\beq\label{lip f 0}
\lip f=0 \mbox{ on }E.
\eeq

Let $F=(0,1)\backslash E$.  Because $E$ is trim, it follows that $F$ is dense in $(0,1)$ and that given any collection $\mathcal{F}$ of pairwise non-overlapping closed intervals whose endpoints are in $F$, we can choose an open set $U$ such that $U$ is trim on $\mathcal{F}$ and $E\subset U$.

We first make use of the trimness of $E$ and the denseness of $F$ to construct a sequence of open sets $\{U_n\}$, countable sets $\{Z_n\}$, and collections of closed intervals $\{\mathcal{F}_n\}$ which will be used to construct $f$.  Let $\{V_n\}_{n=1}^\infty$ be a collection of open sets such that $V_{n+1}\subset V_n$ for all $n \in \nn$ and $E=\cap_{n=1}^\infty V_n$ and define $U_1=(0,1)$.  Next choose $Z_1=\{z_j\}_{j\in\zz}$ such that 
$Z_1 \subset F$ and $Z_1 $ is 1-close on $U_1$ and let $\mathcal{F}_1=\{[z_j,z_{j+1}]\}_{j \in \zz}$.  Now choose $U_2=\sqcup_{j\in J_2} (a_{2,j},b_{2,j})$ so that $U_2$ is trim on $\mathcal{F}_1$ and $E\subset U_2 \subset V_2$.  We proceed by choosing $Z_2=\cup_{j\in J_2} Z_{2,j}$ where 
$Z_{2,j}=\{z_{2,j,i} \,:\, i \in \zz\}$ is 2-close on $(a_{2,j},b_{2,j})$ for each $j \in J_2$.  We also require that $Z_2 \subset F$.  Then we define $\mathcal{F}_2=\{[z_{2,j,i},z_{2,j,i+1}] \,: \, j \in J_2, i \in \zz\}$. 

 Making use of the trimness of $E$ and the denseness of $F$ we proceed with an inductive argument and choose a sequence of open sets $\{U_n\}$, countable sets $\{Z_n\}$ and collections of closed intervals $\{\mathcal{F}_n\}$ satisfying the following: 

\beq\label{U inside V }
E \subset U_n \subset V_n \mbox{ for } n \in \nn. 
\eeq

\beq\label{Un an}
U_n=\sqcup_{j\in J_n} (a_{n,j},b_{n,j}) \mbox{ for }n \ge 2
\eeq


For each $n \ge 2 \ $ $Z_n=\cup_{j\in J_n} Z_{n,j}$, where
\beq \label{Zn n close}
Z_{n,j}=\{z_{n,j,i}:\, i\in \zz\} \mbox{ is $n$-close on } (a_{n,j},b_{n,j}) \mbox{ for }j \in J_n
\eeq

\beq\label{union Zn}
\cup_{n=1}^\infty Z_n \subset F
\eeq


\beq\label{Fn 2}
\mathcal{F}_n=\{[z_{n,j,i},z_{n,j,(i+1)}] : \, j\in J_n, \, i \in \zz\} \mbox{ for } n\ge2
\eeq

\beq\label{Un trim}
U_{n+1} \mbox{ is trim on } \mathcal{F}_n \mbox{ for each }n \in \nn.
\eeq

Note that for each $n \in \nn$ we have
\beq\label{Fn subset Un}
\cup_{[a,b]\in \mathcal{F}_n}[a,b]=U_n
\eeq
and
\beq\label{Zn subset U}
Z_n \subset U_n\backslash U_{n+1}.
\eeq

We now begin the construction of $f$.  Define $g_1=0$ on $[0,1]$ and $f_1$ to be zig-zag of order 1 with respect to $Z_1$ on $U_1=(0,1)$.  Proceeding inductively, for each $n \ge 2$ we define $g_n$ so that
\beq\label{Fn gn g}
\mbox{ for all } [a,b]\in \mathcal{F}_{n-1}, \ g_n\vert_{[a,b]}=g(f_{n-1},[a,b],U_n)
\eeq
and
\beq\label{gn fn-1}
g_n(x)=f_{n-1}(x) \mbox{ for all }x \in [0,1]\backslash U_{n-1}
\eeq
and we define $f_n$ so that for all $j \in \nn$
\beq\label{fn zig zag}
f_n \mbox{ is zig-zag of order $n$ with respect to }Z_{n,j} \mbox{ on }[a_{n,j},b_{n,j}]
\eeq
and
\beq\label{fn equal gn}
f_n(x)=g_n(x) \mbox{ for all } x \in [0,1]\backslash U_n.
\eeq
Now define
\beq\label{h1 phi}
h_1=2\Phi_{1,[0,1]}
\eeq
and for each $n \ge 2$ define 
\beq\label{hn def}
h_n=g_n+\sum_{j\in J_n} 2\Phi_{n,[a_{n,j},b_{n,j}]}.
\eeq

We note that from (\ref{Fn gn g})--(\ref{fn equal gn}), the definition of $\Phi_n$  and (\ref{fg close}) in Lemma \ref{linear lip} we have:

\beq\label{fn gn norm}
||f_n-g_n\vert\vert_\infty \le\frac1{2^n}
\eeq
and 
\beq\label{gn fn-1 norm}
||g_n-f_{n-1}||_\infty \le \frac1{2^{n-1}}.
\eeq.

Moreover, from (\ref{hn def}) and the definitions of $f_n$ and $g_n$ it follows that 
\beq\label{gfh}
g_n\le f_n \le h_n.
\eeq

It follows from Lemma \ref{linear lip} that for every $n \ge 2$ we have
\beq\label{gh Lip}
g_n \mbox{ and } h_n \mbox{ are Lipschitz on }[a,b] \mbox{ for every }[a,b]\in \mathcal{F}_{n-1}
\eeq
and from (\ref{Fn gn g}) and (\ref{gn fn-1}) we deduce that
\beq\label{gn fn-12}
g_n=f_{n-1} \mbox{ on }([0,1]\backslash U_{n-1})\cup Z_{n-1} \mbox{ for all }n\ge2.
\eeq

Using (\ref{gn fn-12}) and (\ref{fn equal gn}), we get
\beq\label{fn+1=fn}
f_{n}=f_{n-1} \mbox{ on }([0,1]\backslash U_{n-1})\cup Z_{n-1} \mbox{ for all }n \ge 2
\eeq
and therefore, taking (\ref{Zn subset U}) into account, we get
\beq\label{fk=fn}
\mbox{ for all }n\in \nn \mbox{ and for all } k \ge n \, f_k=f_n \mbox{ on }Z_n.
\eeq

It follows from (\ref{fn gn norm}) and (\ref{gn fn-1 norm}) that the sequence $\{f_n\}$ converges uniformly to a function $f$ on
$[0,1]$ and since each $f_n$ is clearly continuous on $[0,1]$, we conclude that $f$ is continuous on $[0,1]$ as well.  We need to show that $L_f^\infty=E$, $\Lip^+f(0)<\infty$, $\Lip^-f(1) < \infty$  and $\lip f=0$ on $E$.

We first note that (\ref{fn+1=fn}) implies that for every $n \in \nn$ we have $f=f_n$ on $[0,1]\backslash U_n$.  Next, using Lemma \ref{n close zig zag}, it is straightforward to show that $h_{n+1}\le h_n$ for $n \in \nn$ and from the construction of the $g_ns$ it is not hard to see that $g_n \le g_{n+1}$ for all $n \in \nn$.  Therefore, by (\ref{gfh}), we have
\beq\label{gn less f less hn}
g_n \le f \le h_n \mbox{ for all }n \in \nn.
\eeq
Using (\ref{gn less f less hn}) in the case $n=1$, we have that $0 \le f \le 2\Phi_{1,[0,1]}$ and it follows easily that
$\Lip^+f(0)\le 2$ and $\Lip^-f(1) \le 2$.

Fix $n \ge 2$ and let $[a,b]\in \mathcal{F}_{n-1}$.  Then from (\ref{gh Lip}), (\ref{gn less f less hn}) and Lemma \ref{lip squeeze} we conclude that $\Lip f$ is finite on $(a,b)\backslash U_n$ and $\Lip^+f(a) < \infty$ and $\Lip^-f(b)<\infty$.  Using (\ref{Fn subset Un}), we conclude that $\Lip f$ is finite on $U_{n-1}\backslash U_n$ and unfixing $n$ we get that $\Lip f$ is finite on
$\cup_{n=2}^\infty (U_{n-1}\backslash U_n)=(0,1)\backslash E$.

We next show that $\Lip f=\infty$ on $E$.  We begin by observing that from (\ref{fn+1=fn}) and (\ref{Zn subset U}) it follows that $f=f_n$ on $Z_n$ for all $n \in \nn$.  Let $x \in E$.  For each $n \in \nn$ choose $[a_n,b_n]\in \mathcal{F}_n$ such that $x \in [a_n,b_n]$ and let $r_n=b_n-a_n$.
Then using (\ref{F steep}) from Lemma \ref{n close zig zag}, we see that
$$ M_f(x,r_n)\ge \frac{|f(b_n)-f(a_n)|}{2r_n}=\frac{|f_n(b_n)-f_n(a_n)|}{2r_n}\ge 2^{n-1}.$$
Since $r_n \to 0$, we conclude that $\Lip f(x)=\infty$, as desired.

It remains to show that $\lip f(x)=0$ for all $x \in E$.  Let $x \in E$, so $x \in U_n$ for all $n \in \nn$.  For each $n\ge 2$ choose $j(n)$ so that $x \in (a_{n,j(n)},b_{n,j(n)})\subset U_n$.  For notational convenience we let $a_n=a_{n,j(n)}$ and $b_n=b_{n,j(n)}$ so we have $x \in (a_n,b_n) \subset U_n$ for all $n \ge 2$.  Fix $n \ge 2$ momentarily.  From (\ref{Fn gn g}) we see that $g_n$ is constant on $[a_n,b_n]$.  Then using (\ref{hn def}) and (\ref{gn less f less hn}), we deduce that
\beq\label{gfg}
g_n(a_n)\le f(y) \le g_n(a_n)+2\Phi_{n,[a_n,b_n]}(y) \mbox{ for all }y \in [a_n,b_n].
\eeq
Letting $s_n=\min\{x-a_n,b_n-x\}$, we see that $M_f(x,s_n)\le \frac1{2^{n-2}}$.  Since $s_n \to 0$, it follows that $\lip f(x)=0$ and we are finished with the proof of Theorem  \ref{trim g delta}.
\end{proof}

\section{Proof of Theorem \ref{monotone}}

\begin{proof}

Assume that $E$ is $G_\delta$ with $|E|=0$.  We assume without loss of generality that $E \subset (0,1)$.  To get started we need a few definitions and some helpful lemmas.

\begin{definition}\label{n small}
Suppose that $U$ is open, $a < b$ and $n \in \nn$.  Then we say that $U$ is $n$-small on $(a,b)$ if
$E \subset U$ and
\beq\label{n small def}
|U\cap [a+\frac{b-a}{2^{j+1}},a+\frac{b-a}{2^j}]|=|U\cap[b-\frac{b-a}{2^j},b-\frac{b-a}{2^{j+1}}]|=\frac{b-a}{2^{2n+j+1}} \, \forall j \in \nn.
\eeq
Furthermore, if $V$ is an open set and $U$ is $n$-small on each component of $V$, then we say that $U$ is $n$-small on $V$.
\end{definition}

Note that if $U$ is $n$-small on $(a,b)$, then $|U\cap (a,b)|=2^{-2n}(b-a)$.  Moreover, because $|E|=0$, given any open set $V$ such that $E\subset V$ and $n \in \nn$, we can find an open set $U$ which is $n$-small on $V$.

\begin{definition}\label{def of h}
Suppose that the open set $U$ is $n$-small on $(a,b)$.  Then we define $g=h(n,U,(a,b))$ on $[a,b]$ as follows:
\beq\label{h def}
g(x)=2^n \cdot |U\cap[a,x]|.
\eeq
\end{definition}
The proofs of the following two lemmas are straightforward and left to the reader.

\begin{lemma}\label{Mf est lemma}
Suppose that $U=\cup_{j=1}^\infty (a_j,b_j)$ is $n$-small on $(c,d)$,
$$g=h(n,U,(c,d)),$$
$$f(x)=g(x) \mbox{ for all } x \in [c,d]\backslash U,$$
and
$$g(a_j)\le f(x) \le g(b_j) \mbox{ for all } x \in (a_j,b_j) \mbox{ and for all }j \in \nn.$$
Then for all $x \in [c,d]$ we have
\beq\label{f est}
\max\{0,2^{-n+1}(x-\frac{c+d}2)\} \le f(x) \le \min\{2^{-n+1}(x-c),2^{-n}(d-c)\}
\eeq
and letting $r_x = \min\{x-c,d-x\}$, we have
\beq\label{Mf ineq}
M_f(x,r_x) \le 2^{-n+2} \mbox{ for all }x \in (c,d).
\eeq
\end{lemma}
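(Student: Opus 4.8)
The plan is to verify first that the staircase function $g=h(n,U,(c,d))$ itself satisfies the two envelope bounds of (\ref{f est}), and then to promote them to $f$. Note that $g$ is nondecreasing and $2^n$-Lipschitz, that $g(c)=0$, and that, by the remark following Definition \ref{n small}, $g(d)=2^n|U\cap(c,d)|=2^{-n}(d-c)$. For the upper bound write $I_m=[c+\frac{d-c}{2^{m+1}},\,c+\frac{d-c}{2^m}]$ for the $m$-th left dyadic annulus; Definition \ref{n small} gives $|U\cap I_m|=\frac{d-c}{2^{2n+m+1}}$, hence $|U\cap[c,\,c+\frac{d-c}{2^m}]|=\sum_{j\ge m}\frac{d-c}{2^{2n+j+1}}=\frac{d-c}{2^{2n+m}}$. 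So if $x\le\frac{c+d}2$ and $x\in I_m$ then $g(x)=2^n|U\cap[c,x]|\le\frac{d-c}{2^{n+m}}\le 2^{-n+1}(x-c)$, using $x-c\ge\frac{d-c}{2^{m+1}}$; and if $x\ge\frac{c+d}2$ then $g(x)\le g(d)=2^{-n}(d-c)\le 2^{-n+1}(x-c)$. The lower bound is the mirror image through $\frac{c+d}2$: running the same computation with the right-hand annuli $[d-\frac{d-c}{2^m},\,d-\frac{d-c}{2^{m+1}}]$ gives $2^n|U\cap(x,d)|\le\frac{d-c}{2^{n+m}}$ when $x$ lies in the $m$-th right annulus, so $g(x)=g(d)-2^n|U\cap(x,d)|\ge 2^{-n}(d-c)(1-2^{-m})\ge 2^{-n+1}(x-\frac{c+d}2)$ because then $x-\frac{c+d}2\le\frac{d-c}2(1-2^{-m})$; for $x\le\frac{c+d}2$ the lower envelope is just $0\le g(x)$.

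To pass to $f$, for each $x$ set $x^-\le x\le x^+$ with $x^\pm=x$ when $x\notin U$, and $x^-=a_j$, $x^+=b_j$ when $x\in(a_j,b_j)$. Then $x^\pm\in[c,d]\setminus U$ and the hypothesis yields $g(x^-)\le f(x)\le g(x^+)$, so on $[c,d]\setminus U$ there is nothing more to do. On a component $(a_j,b_j)$ the crucial feature of an $n$-small set enters: since $|U\cap I_m|=2^{-2n}|I_m|<|I_m|$, no component of $U$ contains a whole annulus, so $(a_j,b_j)$ meets at most two consecutive left (resp.\ right) annuli, and moreover $|(a_j,b_j)\cap I_m|\le|U\cap I_m|$. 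Feeding this into the dyadic sums above shows that $g(b_j)$ still obeys $g(b_j)\le\min\{2^{-n+1}(x-c),\,2^{-n}(d-c)\}$ and $g(a_j)\ge\max\{0,\,2^{-n+1}(x-\frac{c+d}2)\}$ for every $x\in(a_j,b_j)$ --- intuitively, the component is so short relative to the distance of $x$ from the nearer of $c,d$ that $x^-$, $x$, $x^+$ all sit inside the same one or two annuli and the estimates for $g$ carry over. This gives (\ref{f est}) in full.

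Granting (\ref{f est}), I would obtain (\ref{Mf ineq}) by a short case analysis. Fix $x\in(c,d)$ and $y$ with $|x-y|\le r_x=\min\{x-c,d-x\}$; reflecting through $\frac{c+d}2$ if necessary we may assume $x\le\frac{c+d}2$, so $r_x=x-c$ and $y\in[c,2x-c]$. Then by (\ref{f est}) and $f\ge 0$, $f(x)-f(y)\le 2^{-n+1}(x-c)=2^{-n+1}r_x$ and $f(y)-f(x)\le 2^{-n+1}(y-c)\le 2^{-n+1}\cdot 2(x-c)=2^{-n+2}r_x$, so $|f(x)-f(y)|\le 2^{-n+2}r_x$ and hence $M_f(x,r_x)\le 2^{-n+2}$. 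In the reflected case ($x\ge\frac{c+d}2$, $r_x=d-x$, $y\in[2x-d,d]$) one uses the right-hand estimates of (\ref{f est}): the bound $f(y)-f(x)\le 2^{-n}(d-c)-2^{-n+1}(x-\frac{c+d}2)=2^{-n+1}(d-x)$ settles one direction, and for the other one splits on whether $y\ge\frac{c+d}2$ (then $d-y\le 2(d-x)$, and the same computation with $y$ gives $\le 2^{-n+2}r_x$) or $y<\frac{c+d}2$ (then $2x-d\le y<\frac{c+d}2$ forces $d-x>\frac{d-c}4$, so the crude bound $f(x)-f(y)\le f(x)\le 2^{-n}(d-c)<2^{-n+2}(d-x)$ suffices).

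The step I expect to be the main obstacle is the transfer of the envelope bounds to $f$ on the components $(a_j,b_j)$: there $f$ is only trapped between $g(a_j)$ and $g(b_j)$, and making the inequalities come out with the stated constants genuinely relies on the geometric fact that the components of an $n$-small set are short compared with their distance from the ends of $(c,d)$, together with some care in the dyadic bookkeeping. Once that is in hand, the rest --- the estimates for $g$ and the deduction of (\ref{Mf ineq}) --- is routine computation with finite and geometric sums.
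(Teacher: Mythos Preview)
The paper does not actually prove this lemma; it says the proofs of Lemmas~\ref{Mf est lemma} and~\ref{f lipschitz} ``are straightforward and left to the reader.'' So there is nothing to compare your argument against, and I can only assess it on its own terms.

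Your architecture is the right one, and two of the three pieces are solid. The dyadic computation showing that $g$ itself satisfies (\ref{f est}) is correct, and your derivation of (\ref{Mf ineq}) from (\ref{f est}) (including the reflection $\tilde f(x)=2^{-n}(d-c)-f(c+d-x)$, which does preserve the envelope) is clean.

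The gap is exactly where you flag it: promoting (\ref{f est}) from $g$ to $f$ on a component $(a_j,b_j)$. You assert that ``$g(b_j)\le\min\{2^{-n+1}(x-c),\,2^{-n}(d-c)\}$ for every $x\in(a_j,b_j)$,'' but with the constants as stated this can fail. Take $n=1$, $(c,d)=(0,1)$, and choose $U$ so that $(7/32,5/16)$ is a component of $U$; this is compatible with Definition~\ref{n small}, since $|(7/32,5/16)\cap I_2|=1/32$ and $|(7/32,5/16)\cap I_1|=1/16$ exhaust the quotas in $I_2=[1/8,1/4]$ and $I_1=[1/4,1/2]$. Then $g(5/16)=2\,|U\cap[0,5/16]|=2\cdot\tfrac18=\tfrac14$, while for $x$ just above $7/32$ the upper envelope is $\min\{x,\tfrac12\}=x<\tfrac14$. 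So an $f$ with $f(x)=g(b_j)=\tfrac14$ there violates (\ref{f est}). Your geometric observation that components are short relative to their distance from $c$ is correct and does give
\[
f(x)\le g(b_j)\le 2^{-n+1}(b_j-c)\le 2^{-n+1}\Bigl(1+\tfrac{3}{2^{2n}}\Bigr)(x-c),
\]
so the inequality holds up to the harmless factor $1+3\cdot 2^{-2n}\le\tfrac74$; in the straddling case above the overshoot is exactly $8/7$. In other words, either the constants in (\ref{f est}) are slightly too sharp as written, or an additional placement hypothesis on $U$ is implicitly intended; either way the qualitative content --- Lipschitz envelopes pinching at $c$ and $d$, and $M_f(x,r_x)\le C\,2^{-n}$ --- goes through, and that is all the proof of Theorem~\ref{monotone} actually uses.
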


\begin{lemma}\label{f lipschitz}
Suppose that the open set $V$ is trim in $[a,b]$ and $h$ is linear on $[a,b]$ with slope $m>1$ and $n \in \nn$.  Set $\alpha=|[a,b]\backslash V|$ and define
$f=f(h,[a,b],V,n)$ on $[a,b]$ as follows:
$$f(x)=h(a)+(\frac{(m-2^{-n+1})(b-a)}\alpha+\frac1{2^n})|[a,x]\backslash V|+\frac1{2^n}|[a,x]\cap V|.$$
Then $f$ is increasing on $[a,b]$ and
\beq\label{f=h}
f(a)=h(a) \mbox{ and } f(b)=h(b),
\eeq
\beq\label{f slope 2 -n}
f \mbox{ is linear with slope } 2^{-n} \mbox{ on each component of }(a,b) \cap V,
\eeq
and
\beq\label{f is lipschitz}
f \mbox{ is Lipschitz on }[a,b].
\eeq

\end{lemma}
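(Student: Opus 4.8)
The plan is to verify the four conclusions by elementary bookkeeping with the two quantities $|[a,x]\setminus V|$ and $|[a,x]\cap V|$, whose sum is $x-a$. The key preliminary step is to use that identity to rewrite the definition of $f$ in the compact form
\beq\label{frewrite}
f(x) = h(a) + C\,|[a,x]\setminus V| + \frac1{2^n}(x-a),\qquad C := \frac{(m-2^{-n+1})(b-a)}{\alpha}.
\eeq
Before using \eqref{frewrite} I would record two facts: since $V$ is trim in $[a,b]$, $\alpha=|[a,b]\setminus V| = (b-a)-|V\cap[a,b]| > 0$, so $C$ is well defined; and since $m>1$ while $2^{-n+1}\le 1$ for every $n\in\nn$, we have $C\ge 0$. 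This non-negativity is the only place the hypothesis that the slope $m$ exceeds $1$ actually enters.

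For the endpoint identities \eqref{f=h}, substitute $x=a$ into \eqref{frewrite}, using $|[a,a]\setminus V|=0$, to get $f(a)=h(a)$ at once; then substitute $x=b$, using $|[a,b]\setminus V|=\alpha$, so that the middle term becomes $C\alpha=(m-2^{-n+1})(b-a)$, and simplify to obtain $f(b)=h(a)+m(b-a)=h(b)$. For monotonicity, observe in \eqref{frewrite} that $x\mapsto|[a,x]\setminus V|$ is nondecreasing and $x\mapsto\frac1{2^n}(x-a)$ is strictly increasing; since $C\ge 0$, $f$ is strictly increasing on $[a,b]$.

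For \eqref{f slope 2 -n}, let $(r,s)$ be a component of $(a,b)\cap V$. For $x\in[r,s]$ we have $(r,x]\subset V$, so $|[a,x]\setminus V|=|[a,r]\setminus V|$ is constant on $[r,s]$, and \eqref{frewrite} reduces there to $f(x)=(\text{const})+\frac1{2^n}(x-a)$, i.e.\ $f$ is linear on $[r,s]$ with slope $2^{-n}$. Finally, for \eqref{f is lipschitz}, take $a\le y\le x\le b$; from \eqref{frewrite} and monotonicity,
\beq\label{lipest}
0\le f(x)-f(y) = C\,|[y,x]\setminus V| + \frac1{2^n}(x-y) \le \Bigl(C+\frac1{2^n}\Bigr)(x-y),
\eeq
so $f$ is Lipschitz with constant $C+2^{-n}$. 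I do not expect a genuine obstacle here—this is one of the ``straightforward'' lemmas the note leaves to the reader—so the only points deserving care are the positivity of $\alpha$ and of $C$ (where $m>1$ is used) and the observation that on a component of $V$ the ``$\setminus V$'' measure is frozen while the total length $x-a$ keeps increasing; the displayed computations above make both of these precise.
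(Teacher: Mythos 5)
Your overall strategy---rewriting $f$ via $|[a,x]\setminus V|+|[a,x]\cap V|=x-a$ as $f(x)=h(a)+C\,|[a,x]\setminus V|+\frac1{2^n}(x-a)$ with $C=\frac{(m-2^{-n+1})(b-a)}{\alpha}$, checking $\alpha>0$ and $C>0$ from trimness and $m>1$, and then reading off monotonicity, the slope $2^{-n}$ on components of $(a,b)\cap V$, and the Lipschitz constant $C+2^{-n}$---is exactly the elementary verification the paper intends (it leaves this lemma to the reader), and those parts are correct.

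However, your verification of the second identity in (\ref{f=h}) does not close. With the formula as displayed, $f(b)=h(a)+C\alpha+2^{-n}(b-a)=h(a)+(m-2^{-n+1}+2^{-n})(b-a)=h(a)+(m-2^{-n})(b-a)$, which falls short of $h(b)=h(a)+m(b-a)$ by $2^{-n}(b-a)$; the step ``simplify to obtain $f(b)=h(a)+m(b-a)$'' is an algebra slip, because the term $\frac1{2^n}(x-a)$ contributes only $2^{-n}(b-a)$ at $x=b$, not $2^{-n+1}(b-a)$. Carrying out this computation honestly reveals that the lemma's displayed formula contains a typo: for $f(b)=h(b)$ to hold together with slope $2^{-n}$ on $V$ (which is what the later applications in (\ref{fn+1=f}) and (\ref{fn less than fk}) require), the coefficient of $|[a,x]\setminus V|$ must be $\frac{(m-2^{-n})(b-a)}{\alpha}+\frac1{2^n}$, i.e.\ the numerator should read $(m-2^{-n})(b-a)$ rather than $(m-2^{-n+1})(b-a)$. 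With that correction your argument goes through verbatim ($m>1$ still gives $C>0$); as written, though, the claimed identity $f(b)=h(b)$ is simply false for the stated $f$, and a correct proof must either make this correction explicit or flag the discrepancy rather than assert the endpoint computation works out.
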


\begin{definition}\label{trim on U}
Suppose that $U$ and $V$ are open sets and $V$ is trim on each component of $U$.  Then we say that $V$ is trim on $U$.
\end{definition}

Let $V_1=(0,1)$ and define $\{U_n\}$ and $\{V_n\}$ inductively to satisfy the following conditions:

\beq\label{Un small on Vn}
U_n \mbox{ is $n$-small on }V_n \mbox{ for all } n \in \nn
\eeq

\beq\label{Vn trim on Un}
V_{n+1} \mbox{ is trim on }U_n \mbox{ for all } n \in \nn
\eeq

\beq\label{Vn in Un}
V_{n+1} \subset U_n \subset V_n \mbox{ for all }n \in \nn
\eeq

\beq\label{intersect Vn}
\cap_{n=1}^\infty V_n =E.
\eeq

For each $n \in \nn$ we define $\mathcal{U}_n$ to the the collection of all components of $U_n$ and $\mathcal{V}_n$ to be the collection of all
components of $V_n$.

We now begin the construction of a function $f$ satisfying the conclusions of the theorem.   We begin by defining sequences of functions $\{f_n\}$ and $\{g_n\}$ on $[0,1]$ and then defining $f=\lim_{n \to \infty}f_n=\lim_{n\to \infty}g_n$.  The $g_n$s will be constructed to ensure that $\Lip f=\infty$ on $E$ and the $f_n$s will be constructed to ensure that $\lip f=0$ on $E$.   First of all, define $f_1(x)=\frac12x$ on $[0,1]$.  Then construct the $f_n$s and $g_n$s inductively to satisfy the following for all $n \in \nn$:

\beq\label{gn restricted to I}
\mbox{ for all }I=(a,b)\in \mathcal{V}_n, \, g_n\vert_{\overline{I}}=h(n,U_n,I)+f_n(a)
\eeq
and
\beq\label{gn equal fn}
g_n=f_n \mbox{ on }[0,1]\backslash V_n
\eeq
\beq\label{fn+1=f}
\mbox{ for all }J \in \mathcal{U}_n, \, f_{n+1}\vert_{\overline{J}}=f(g_n,\overline{J},V_{n+1},n+1)
\eeq
and
\beq\label{fn+1=gn}
f_{n+1}=g_n \mbox{ on }[0,1]\backslash U_n.
\eeq

The following lemma follows easily from the definitions above.  We leave the proof to the reader.

\begin{lemma}\label{fn gn Lipschitz}
For every $n\in \nn$ we have the following:
\beq\label{fn and gn lipschitz}
f_n \mbox{ and }g_n \mbox{ are Lipschitz on }[0,1],
\eeq
\beq\label{fn gn increasing}
f_n \mbox{ and }g_n \mbox{ are increasing on }[0,1],
\eeq
\beq\label{gn squeeze}
\mbox{ for all }J=(c,d)\in \mathcal{V}_n, \, f_n(c)\le g_n(x) \le f_n(d) \, \forall x \in \overline{J},
\eeq
\beq\label{fn squeeze}
\mbox{ for all }I=(a,b)\in \mathcal{U}_n \ g_n(a) \le f_{n+1}(x) \le g_n(b) \, \forall x \in \overline{I}.
\eeq
\end{lemma}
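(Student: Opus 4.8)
The plan is to prove \ref{fn and gn lipschitz}--\ref{fn squeeze} together by induction on $n$, where the inductive hypothesis at level $n$ also carries the extra fact that $f_n$ is linear with slope $2^{-n}$ on the closure of each component of $V_n$, and where the argument produces along the way the companion fact that $g_n$ is linear with slope $2^n$ on the closure of each component of $U_n$. The linearity of $f_n$ on components of $V_n$ is exactly what turns the piecewise recipes \ref{gn restricted to I}--\ref{fn+1=gn} into \emph{continuous} functions: for a component $I=(a,b)$ of $V_n$, the remark after Definition \ref{n small} gives $h(n,U_n,I)(b)=2^n|U_n\cap I|=2^n\cdot 2^{-2n}(b-a)=2^{-n}(b-a)$, so $g_n(b)=f_n(a)+2^{-n}(b-a)=f_n(b)$, which matches the value \ref{gn equal fn} forces on $[0,1]\setminus V_n$; likewise $f_{n+1}$ matches $g_n$ at both endpoints of each component of $U_n$ by \ref{f=h}. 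Once continuity is in hand, the monotonicity \ref{fn gn increasing} is immediate (each piece is increasing and the pieces agree at the shared endpoints), and the squeezes \ref{gn squeeze} and \ref{fn squeeze} are read straight off: on $\overline{I}$ the function $g_n$ climbs monotonically from $f_n(a)$ to $f_n(a)+2^{-n}(b-a)=f_n(b)$, while on the closure of a component $(a,b)$ of $U_n$ the function $f_{n+1}$ climbs monotonically from $g_n(a)$ to $g_n(b)$ by \ref{f=h}.

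For the base case $n=1$, $f_1(x)=\tfrac12 x$ is increasing, Lipschitz, and linear of slope $2^{-1}$ on $\overline{V_1}=[0,1]$; then \ref{gn restricted to I} gives $g_1(x)=2\,|U_1\cap[0,x]|$, which is increasing, $2$-Lipschitz, linear of slope $2^{1}$ on the closure of each component of $U_1$, and has range $[0,\tfrac12]=[f_1(0),f_1(1)]$, so \ref{fn and gn lipschitz}--\ref{gn squeeze} hold at $n=1$; and for each component $I=(a,b)$ of $U_1$, since $g_1$ is linear of slope $2>1$ on $\overline{I}$ and $V_2$ is trim in $\overline{I}$ by \ref{Vn trim on Un}, Lemma \ref{f lipschitz} shows $f_2|_{\overline{I}}=f(g_1,\overline{I},V_2,2)$ is increasing, Lipschitz on $\overline{I}$, linear of slope $2^{-2}$ on each component of $I\cap V_2$, and runs from $g_1(a)$ to $g_1(b)$; this gives \ref{fn squeeze} at $n=1$ and, since every component of $V_2$ lies in a component of $U_1$ by \ref{Vn in Un}, the inductive-hypothesis linearity of $f_2$. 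The inductive step just repeats these two moves one level higher: from the hypothesis at level $n$ (in particular $f_{n+1}$ increasing, Lipschitz, and linear of slope $2^{-(n+1)}$ on the closure of each component of $V_{n+1}$), the recipes \ref{gn restricted to I}--\ref{gn equal fn} build $g_{n+1}$, which is continuous by the endpoint identity above, increasing, linear of slope $2^{n+1}$ on the closure of each component of $U_{n+1}$, and satisfies \ref{gn squeeze} at level $n+1$; then Lemma \ref{f lipschitz}, applied on each component $(a,b)$ of $U_{n+1}$ where $g_{n+1}$ is linear of slope $2^{n+1}>1$ and $V_{n+2}$ is trim, builds $f_{n+2}$, yielding \ref{fn squeeze} at level $n+1$ and the hypothesis at level $n+1$.

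The genuinely delicate point --- and the step I expect to cost the most care --- is upgrading ``Lipschitz on each component interval'' to ``Lipschitz on all of $[0,1]$''. Lemma \ref{f lipschitz} controls $f_{n+1}$ only on one component of $U_n$ at a time, and the slope it produces on $[a,b]\setminus V_{n+1}$, namely $\frac{(2^n-2^{-n})(b-a)}{|[a,b]\setminus V_{n+1}|}+2^{-(n+1)}$, blows up if $V_{n+1}$ fills almost all of a component $(a,b)$ of $U_n$. I would dispose of this by recording at the start that, because $|E|=0$, the inductive choice of the $U_n$ and $V_n$ can be made to also satisfy the normalization $|V_{n+1}\cap(a,b)|\le\tfrac12(b-a)$ for every component $(a,b)$ of $U_n$ (condition \ref{Un small on Vn} already pins $|U_n\cap(a,b)|=2^{-2n}(b-a)$ on components of $V_n$); then every local slope occurring above is at most a fixed multiple of $2^{n+1}$. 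The global Lipschitz bound then follows from an elementary gluing fact: if $\varphi$ is continuous and increasing on $[0,1]$, is $C$-Lipschitz on the closure of each component of an open set $W\subset(0,1)$, and agrees on $[0,1]\setminus W$ with an $L$-Lipschitz function, then $\varphi$ is $\max(C,L)$-Lipschitz on $[0,1]$ --- proved by splitting a given $[x,y]$ at the infimum and supremum of $[x,y]\setminus W$, estimating the two outer subintervals (each lying in a single component closure) by $C$ and the middle one (whose endpoints lie in $[0,1]\setminus W$) by $L$. Applying this with $(\varphi,W)=(f_{n+1},U_n)$ and with $(\varphi,W)=(g_{n+1},V_{n+1})$ finishes \ref{fn and gn lipschitz}, and with it the induction.
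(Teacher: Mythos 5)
The paper does not actually prove this lemma (it is explicitly left to the reader), so there is no argument of the author's to compare yours against; judged on its own, your proof is correct and supplies exactly the missing details. The auxiliary facts you thread through the induction --- $f_n$ linear with slope $2^{-n}$ on the closure of each component of $V_n$, and $g_n$ linear with slope $2^n$ on the closure of each component of $U_n$ --- are precisely what make the piecewise recipes (\ref{gn restricted to I})--(\ref{fn+1=gn}) agree at shared endpoints, since $h(n,U_n,I)$ rises by $2^n|U_n\cap I|=2^{-n}|I|$ across a component $I$ of $V_n$; after that, (\ref{fn gn increasing})--(\ref{fn squeeze}) read off as you say, and your gluing lemma is sound (in fact its continuity and monotonicity hypotheses are not even needed: splitting $[x,y]$ at the infimum and supremum of $[x,y]\setminus W$ already yields the $\max(C,L)$ bound). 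Your most substantive observation is also well taken: conditions (\ref{Un small on Vn})--(\ref{intersect Vn}) by themselves do not bound $|(a,b)\setminus V_{n+1}|/(b-a)$ away from zero over the components $(a,b)$ of $U_n$, so the componentwise Lipschitz constants furnished by Lemma \ref{f lipschitz} need not be uniformly bounded, and (\ref{fn and gn lipschitz}) could genuinely fail for a perverse choice of $V_{n+1}$; your added normalization $|V_{n+1}\cap(a,b)|\le\frac12(b-a)$, available because $|E|=0$, is a correct and harmless repair of something the paper leaves implicit in its ``define $\{U_n\}$ and $\{V_n\}$ inductively'' step. One minor caveat: the displayed formula in Lemma \ref{f lipschitz} contains a typo (as printed it gives $f(b)=h(b)-2^{-n}(b-a)$, contradicting (\ref{f=h}); the constant $2^{-n+1}$ should be $2^{-n}$), and the slope you quoted is the printed one; this does not affect your argument, since elsewhere you rely only on the lemma's stated conclusions, and the corrected slope is still of order $2^{n+1}$ under your normalization.
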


It follows from (\ref{Vn in Un}) and (\ref{gn restricted to I}) - (\ref{fn+1=gn}) that for all $k \ge n\ge 1$ we have:
\beq\label{fn less than fk}
f_n(c)\le f_k(x)\le f_n(d)=f_n(c)+\frac{d-c}{2^n}\, \forall x \in J=(c,d)\in \mathcal{V}_n 
\eeq
and
\beq\label{fk=fn}
f_k(x)=f_n(x) \, \forall x\in [0,1]\backslash V_n.
\eeq
Therefore, we may conclude that
\beq\label{norm ineq}
\vert\vert f_k - f_j \vert\vert_\infty \le \max_{J \in \mathcal{V}_n} \frac{|J|}{2^n} \ \forall j,k \ge n.
\eeq
Since $\max_{J \in \mathcal{V}_n} \frac{|J|}{2^n} \to 0$ as $n \to \infty$, it follows that $\{f_n\}$ converges uniformly to
a function $f$ on $[0,1]$.  Moreover, using (\ref{gn equal fn}) and (\ref{gn squeeze}), we may conclude that $\{g_n\}$ converges uniformly to $f$ as well.
Since each $g_n$ is continuous on $[0,1]$, it follows that $f$ is also continuous on $[0,1]$.  We also note that (\ref{Vn in Un}) and (\ref{gn equal fn}) - (\ref{fn+1=gn})  imply that for all $n \in \nn$ and for all $k \ge n$, we have
$$g_n(a) \le g_k(x) \le g_n(b) \mbox{ for all }x \in (a,b) \in \mathcal{U}_n,$$
and therefore for $n \in \nn$ we have
\beq\label{f squeeze gn}
g_n(a)\le f(x) \le g_n(b) \mbox{ for all } x \in (a,b) \in \mathcal{U}_n.
\eeq
Now note that since $f =\lim f_n$, (\ref{fk=fn}) implies that
\beq\label{f equal fn}
f(x)=f_n(x) \mbox{ for all }x \in [0,1]\backslash V_n.
\eeq
Furthermore, using (\ref{Vn in Un}), (\ref{fn+1=gn}) and (\ref{f equal fn}), we get
\beq\label{f equal gn}
f(x)=g_n(x) \mbox{ for all }x \in [0,1]\backslash U_n.
\eeq
Let $n \in \nn$ and $J =(c,d)\in \mathcal{V}_n$.  Then $U_n$ is $n$-small on $J$,
$$g_n\vert_J=h(n,U_n,J)+f_n(c)=h(n,U_n,J)+f(c),
$$
and $f(x)=g_n(x)$ for all $x \in J \backslash U_n$ by (\ref{f equal gn}).  Using (\ref{f squeeze gn}), we can apply Lemma \ref{Mf est lemma} to conclude that (\ref{f est}) holds
for all $x \in J$.

From (\ref{f est}), (\ref{f equal fn}) and (\ref{fn and gn lipschitz}) we deduce that $\Lip f$ is bounded on $[0,1]\backslash V_n$ and therefore by (\ref{intersect Vn})
we get that $\Lip f < \infty$ on $[0,1]\backslash E.$

It remains to show that $\Lip f=\infty$ on $E$ and $\lip f =0$ on $E$.  Let $x\in E$.

We first show that $\Lip f(x) = \infty$.  For each $n \in \nn$ choose $(a_n,b_n)\in \mathcal{U}_n$ such that $x \in (a_n,b_n)$.  From (\ref{gn restricted to I}) and (\ref{h def}) we see that $g_n(b_n)-g_n(a_n)=2^n(b_n-a_n)$.  However, by (\ref{f equal gn}) we have that $f(a_n)=g_n(a_n)$ and $f(b_n)=g_n(b_n)$ and therefore $M_f(x,r_n) \ge 2^{n-1}$ where
$r_n$ is either equal to $x-a_n$ or $b_n-x$.  Letting $n \to \infty$, we conclude that $\Lip f(x)=\infty$.

Finally, we show that $\lip f(x)=0$.  For each $n \in \nn$ choose $(c_n,d_n)\in \mathcal{V}_n$ such that $x \in (c_n,d_n)$ and let $r_n =\min\{x-c_n,d_n-x\}$.  Then from
(\ref{Mf ineq}) we conclude that $M_f(x,r_n)\le 2^{-n+2}$.  Letting $n \to \infty$ we deduce that $\lip f(x) =0$ and we are done with the proof of Theorem \ref{monotone}.

\end{proof}

\section{Some Related Problems}

Let $\l_f^0=\{x \in \rr \ : \ \mbox{lip}f(x)=0\}$.
There are a number of open problems concerning the relationship between the sets $L_f^\infty$, $l_f^\infty$ and $l^0_f$. 
For example, it would be interesting to characterize the sets $E \subset \rr$ that satisfy any of the following conditions.   
\be

\item $E\subset L^\infty_f \cap l^0_f$.
\medskip 

\item $E\subset L^\infty_f \cap l^0_f$ and $l_f^\infty =\emptyset$.

\item $E=L^\infty_f \cap l^0_f$. 

\item   $E=L^\infty_f \cap l^0_f$ and $l_f^\infty = \emptyset$. 

\item $L^\infty_f=\rr$ and $\l^0_f=E$.   

\item $L^\infty_f=l^0_f=E$.

\ee


\end{document}